\begin{document}
	\large
	
	\title{{Even Order Pascal Tensors are Positive Definite}}
	\author{Chunfeng Cui\footnote{LMIB of the Ministry of Education, School of Mathematical Sciences, Beihang University, Beijing 100191 China.
			({\tt chunfengcui@buaa.edu.cn}).}
		\and { \
			Liqun Qi\footnote{Department of Applied Mathematics, The Hong Kong Polytechnic University, Hung Hom, Kowloon, Hong Kong.
				({\tt maqilq@polyu.edu.hk}).}
		}
		\and {and \
			Yannan Chen\footnote{School of Mathematical Sciences, South China Normal University, Guangzhou 510631, China (\texttt{email: ynchen@scnu.edu.cn}). 
		}}
	}
	\date{\today}
	\maketitle
	
	\begin{abstract}
		
		{In this paper, we show that even order Pascal tensors are positive definite, and odd order Pascal tensors are strongly completely positive.  The significance of these is that our induction proof method also holds for some other families of completely positives tensors, whose construction satisfies certain rules, such an inherence property holds.  We show that for all tensors in such a family, even order tensors would be positive definite, and odd order tensors would be strongly completely positive, as long as the matrices in this family are positive definite.  In particular, we show that even order generalized Pascal tensors would be positive definite, and odd order generalized Pascal tensors would be strongly completely positive, as long as generalized Pascal matrices are positive definite.  We also investigate even order positive definiteness and odd order strongly completely positivity for  fractional Hadamard power   tensors.}  {Furthermore, we study   determinants of Pascal tensors.} We    prove that the determinant of the $m$th order two dimensional symmetric Pascal tensor is equal to the $m$th power of the factorial of $m-1$.

		\medskip


		\textbf{Key words.} Pascal tensor, {positive definite tensor, completely positive tensor, strongly completely positive tensor}, determinant
	\end{abstract}

	\renewcommand{\Re}{\mathds{R}}
	\newcommand{\rank}{\mathrm{rank}}
	\newcommand{\X}{\mathcal{X}}
	\newcommand{\A}{\mathcal{A}}
	\newcommand{\I}{\mathcal{I}}
	\newcommand{\B}{\mathcal{B}}
	\newcommand{\C}{\mathcal{C}}
	\newcommand{\OO}{\mathcal{O}}
	\newcommand{\e}{\mathbf{e}}
	\newcommand{\0}{\mathbf{0}}
	\newcommand{\dd}{\mathbf{d}}
	\newcommand{\ii}{\mathbf{i}}
	\newcommand{\jj}{\mathbf{j}}
	\newcommand{\kk}{\mathbf{k}}
	\newcommand{\va}{\mathbf{a}}
	\newcommand{\vb}{\mathbf{b}}
	\newcommand{\vc}{\mathbf{c}}
	\newcommand{\vq}{\mathbf{q}}
	\newcommand{\vg}{\mathbf{g}}
	\newcommand{\pr}{\vec{r}}
	\newcommand{\pc}{\vec{c}}
	\newcommand{\ps}{\vec{s}}
	\newcommand{\pt}{\vec{t}}
	\newcommand{\pu}{\vec{u}}
	\newcommand{\pv}{\vec{v}}
	\newcommand{\pn}{\vec{n}}
	\newcommand{\pp}{\vec{p}}
	\newcommand{\pq}{\vec{q}}
	\newcommand{\pl}{\vec{l}}
	\newcommand{\vt}{\rm{vec}}
	\newcommand{\vx}{\mathbf{x}}
	\newcommand{\vy}{\mathbf{y}}
	\newcommand{\vu}{\mathbf{u}}
	\newcommand{\vv}{\mathbf{v}}
	\newcommand{\y}{\mathbf{y}}
	\newcommand{\vz}{\mathbf{z}}
	\newcommand{\T}{\top}
	\newcommand{\R}{\mathcal{R}}

	\newtheorem{Thm}{Theorem}[section]
	\newtheorem{Def}[Thm]{Definition}
	\newtheorem{Ass}[Thm]{Assumption}
	\newtheorem{Lem}[Thm]{Lemma}
	\newtheorem{Prop}[Thm]{Proposition}
	\newtheorem{Cor}[Thm]{Corollary}
	\newtheorem{example}[Thm]{Example}
	\newtheorem{remark}[Thm]{Remark}
	
	\section{Introduction}
	
	The symmetric Pascal matrix has the form $P = \left({(i+j-2)! \over (i-1)!(j-1)!}\right)$.  Its entries are the entries of the Pascal triangle, i.e., the coefficients of binomial expansion.   It has many interesting properties and applications in numerical analysis, filter design, image and signal processing, probability, combinatorics, numerical analysis, {chaotic system}, and electrical engineering {\cite{BP92, ES04, Le10, ZHBHZ22}.}    In 2016, Luo and Qi extended symmetric Pascal matrices to symmetric Pascal tensors and proved that they are completely positive tensors.  Symmetric Pascal tensors are also studied in \cite{QL17}.   An $m$th order $n$-dimensional Pascal tensor has the form
	\begin{equation} \label{pascal}
		{\cal P} = \left({(i_1+\dots i_m - m)! \over (i_1-1)!\dots (i_m-1)!}\right),
	\end{equation}
	where $i_1,\dots,i_m=1,\dots,n, m \ge 3, n\ge 2$.   In this paper, {for brevity,} we simply call $P$ a Pascal matrix, and $\cal P$ a Pascal tensor.
	
	Besides being a completely positive tensor, does a Pascal tensor have any other interesting properties?
	{Structured tensors are very important in tensor analysis and applications \cite{QL17}.
		As the nice structure of Pascal tensors, the study of Pascal tensors may serve as a prototype for studying more complicated structured tensors.  This is the motivation of this paper.
		
		For even order tensors, positive semi-definiteness, positive definiteness and sum-of-squares (SOS) are three most important properties.   Pascal tensors are completely positive tensors.  Even order completely positive tensors are SOS and positive semi-definite  tensors.  Hence, even order Pascal tensors are SOS and positive semi-definite tensors.
		{It is well known that Pascal matrices are positive definite.}
		Thus, a core problem for Pascal tensors is: if even order Pascal tensors are positive definite  or not.   {To the best of our knowledge,} this is an unsolved problem in structured tensor study. In the next section, by induction, we show that even order Pascal tensors are positive definite.
		
		For odd order tensors, strongly completely positiveness is somewhat a property corresponding to positive definiteness in the even order case {\cite{FNZ19, LQ16, YXH22}.}  In Section 3, we show that odd order Pascal tensors are strongly completely positive.
		
		Actually, these two properties are valid for broader families of completely positive tensors.  We state such a theorem in Section 4, and show that even order generalized Pascal tensors are positive definite, and odd order generalized Pascal tensors are strongly completely positive, as long as generalized Pascal matrices are positive definite.  We also investigate these two properties for the   fractional Hadamard power   tensors.}

	{In 2004, Edelman and Strang showed that} the determinant of a Pascal matrix is always $1$ \cite{ES04}.  {This is an exceptionally elegant property. A nature question is:} what is the determinant of a Pascal tensor?  The determinant of a symmetric tensor was introduced in \cite{Qi05}.     Suppose that an $m$th order $n$-dimensional real tensor is denoted as ${\cal A} = \left(a_{i_1\dots i_m}\right)$, where $i_1, \dots, i_m = 1, \dots, n$.  Let ${\vx} \in \Re^n$.   Then ${\cal A}{\vx}^{m-1}$ is a vector in $\Re^n$, whose $i$th component is
	$$\sum_{i_2, \dots, i_m = 1, \dots, n} a_{ii_2\dots i_m}x_{i_2}\dots x_{i_m}.$$
	The determinant of $\cal A$, denoted as det$({\cal A})$, is the resultant of ${\cal A}{\vx}^{m-1} = \mathbf 0$.  For the determinant of such a tensor, also see \cite{HHLQ13, QL17}.
	
	{In Sections 5 and 6,} we explore this problem.  Denote the determinant of the $m$th order $n$-dimensional Pascal tensor by $f(m, n)$.
	{If we count matrices as second order tensors, we have $f(2, n) \equiv 1$ for $n \ge 2$.}
	In {Section 5}, we explore the value of $f(m, 2)$.  We multiply the Sylvester-Pascal matrix of the $m$th order two dimensional Pascal tensor by a certain pivoting matrix.  Then we take LU-decomposition of the reshaped Sylvester-Pascal matrix.  By identifying the formulas of the diagonal entries of the upper triangular matrix of the LU decomposition, and considering the sign changes, we prove that $f(m, 2) = \left[(m-1)!\right]^m$.
	In {Section 6}, we explore the value of $f(m, n)$ for $n \ge 3$.   We make a conjecture that $f(m, n)$ is the $m$th power of an integer function, and is an integer factor of $f(m, n+1)$.  In particular, we make a conjecture on the formula of $f(m, 3)$ for $m \ge 3$.  This forms a challenge to people.
	

	{Some final remarks are made in Section 7.}
	
	In this paper, $f, g, h, i, j, k, m, n, r$ and $s$ are all positive integer valued.


	{\section{Even Order Pascal Tensors Are Positive Definite}
		
		Let $\A = \left(a_{i_1\dots i_m}\right)$ be an $m$th order $n$-dimensional symmetric tensor, where $m \ge 2$ is an even number.  Then $F_{\A}(\vx) = \A\vx^m$ defines a function $F_\A : {\mathbb R}^n \to {\mathbb R}$.  If $F_\A(\vx) \ge 0$ for $\vx \in {\mathbb R}^n$, then $\A$ is called a positive semi-definite tensor.   If $F_\A(\vx)$ can be expressed as SOS polynomials of $x_1, \dots, x_n$, then $\A$ is called an SOS tensor.  If  $F_\A(\vx) > 0$ for $\vx \in {\mathbb R}^n$ and $\vx \not = \0$, then $\A$ is called a positive definite tensor.  An SOS tensor is a positive semi-definite tensor, but not vice versa.   Similarly, a positive definite tensor is a positive semi-definite tensor, but not vice versa.
		These three properties are the most important properties of even order symmetric tensors {\cite{QL17}}.
		
		Let $\A$ be a symmetric tensor of order $m$ and dimension $n$.  If there are nonnegative vectors $\vu^{(1)}, \dots, \vu^{(r)} \in {\mathbb R}^n$ such that
		\begin{equation} \label{cp}
			\A = \left(\vu^{(1)}\right)^m + \dots + \left(\vu^{(r)}\right)^m,
		\end{equation}
		then $\A$ is called a completely positive tensor.  If furthermore $\left\{ \vu^{(1)}, \dots, \vu^{(r)} \right\}$ spans ${\mathbb R}^n$, then $\A$ is called a strongly completely positive tensor \cite{LQ16, QL17}.
		
		A complex number $\lambda$ is called an eigenvalue of $\A$ with an eigenvector $\vx$ if $\vx \in {\mathbb C}^n$, $\vx \not = \0$ and
		$$\A \vx^{m-1} = \lambda \vx^{[m-1]},$$
		where $\vx^{[m-1]} \in {\mathbb C}^n$ and its $i$th component is $x_i^{m-1}$.  If $\vx \in {\mathbb R}^n$, then $\lambda$ is real and called an H-eigenvalue of $\A$.  An even order symmetric tensor always has H-eigenvalues.   An even order symmetric tensor is positive semi-definite if and only if it has no negative H-eigenvalues.   An even order symmetric tensor is positive definite if and only if it has no non-positive H-eigenvalues.  The determinant of a symmetric tensor is equal to the product of all of its eigenvalues.
		These are the main contents of the spectral theory of symmetric tensors \cite{Qi05, QL17}.

		\begin{Thm}\label{Thm:even}
			Let $\cal P$ be an even order Pascal tensor with order $m$ and dimension $n$.   Then it is a positive semi-define tensor and  an SOS tensor.  Furthermore, it is a positive definite tensor as long as its determinant $f(m, n) \not = 0$.
		\end{Thm}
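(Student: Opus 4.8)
The plan is to combine three facts, none of which requires new work: the complete positivity of Pascal tensors established by Luo and Qi (recalled in the introduction), the elementary passage from complete positivity to the SOS property in even order, and the spectral characterisation of positive definiteness through H-eigenvalues and the determinant.

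First I would dispose of positive semi-definiteness and the SOS property together. Since $\cal P$ is completely positive, there are nonnegative $\vu^{(1)}, \dots, \vu^{(r)} \in {\mathbb R}^n$ with ${\cal P} = (\vu^{(1)})^m + \dots + (\vu^{(r)})^m$ as in \eqref{cp}. Setting $y_k := (\vu^{(k)})^\T \vx$, which is a linear form in $x_1, \dots, x_n$, we get for every $\vx \in {\mathbb R}^n$
$$F_{\cal P}(\vx) = {\cal P}\vx^m = \sum_{k=1}^r y_k^m = \sum_{k=1}^r \left(y_k^{m/2}\right)^2,$$
where $y_k^{m/2}$ is a genuine polynomial in $x_1, \dots, x_n$ because $m$ is even. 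Thus $F_{\cal P}$ is a sum of squares, so $\cal P$ is an SOS tensor, and a fortiori $F_{\cal P}(\vx) \ge 0$ for all $\vx$, so $\cal P$ is positive semi-definite.

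For the last assertion I would argue via H-eigenvalues. As $m$ is even and $\cal P$ is symmetric, $\cal P$ has H-eigenvalues, and positive semi-definiteness (just proved) is equivalent to all of them being nonnegative. Since $\det({\cal P}) = f(m, n)$ is the product of all eigenvalues of $\cal P$, the hypothesis $f(m, n) \ne 0$ forces every eigenvalue of $\cal P$ --- in particular every H-eigenvalue --- to be nonzero. Hence all H-eigenvalues of $\cal P$ are strictly positive, $\cal P$ has no non-positive H-eigenvalue, and the spectral criterion gives that $\cal P$ is positive definite.

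I do not anticipate any technical obstacle in this chain once complete positivity of $\cal P$ is in hand; the proof is a short synthesis of facts already stated. The substantive issue is rather whether the hypothesis $f(m, n) \ne 0$ can ever fail: it is automatically satisfied for $n = 2$ by the formula $f(m, 2) = [(m-1)!]^m$ proved in Section~5, but for $n \ge 3$ confirming $f(m, n) \ne 0$ --- or, alternatively, proving positive definiteness by a direct induction on the order that sidesteps the determinant, as indicated in the introduction --- is where the real difficulty lies.
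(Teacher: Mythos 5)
Your proposal is correct and follows essentially the same route as the paper: express $F_{\cal P}(\vx)$ via the completely positive decomposition to get the SOS and positive semi-definite claims, then use the facts that the determinant is the product of all eigenvalues and that a positive semi-definite tensor with no zero H-eigenvalue is positive definite. The only cosmetic difference is that you spell out the square-root step $y_k^m = (y_k^{m/2})^2$ and the eigenvalue bookkeeping slightly more explicitly than the paper does.
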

		\begin{proof}
			By Proposition 6.19 of \cite{QL17}, the Pascal tensor $\cal P$ is a completely positive tensor.   Now, assume that $\cal P$ has the form of (\ref{cp}).  Then
			\begin{equation} \label{FA}
				F_{\cal P}(\vx) = \sum_{k=1}^r \left(\left(\vu^{(k)}\right)^\top \vx\right)^m.
			\end{equation}
			This shows an even order {Pascal tensor} is an SOS tensor, hence a positive semi-definite tensor.
			
			{If in addition, $f(m, n) \not = 0$, then $\cal P$ does not have zero $H$-eigenvalues.} The last conclusion of this theorem follows from {the fact that a positive semi-define tensor is positive  define if and only if it does not have zero H-eigenvalues. This completes the proof.}
		\end{proof}
		
		The last conclusion of this theorem shows the relation between the positive definiteness problem and the determinant problem of Pascal tensors.
		
		
		The following is a sufficient and necessary condition for an even order completely positive tensor to be positive definite.
		
		\begin{Thm} \label{Thm:evenPD}
			Suppose that $\A$ is a completely positive tensor of order $m$ and dimension $n$, where $m$ is even.   Then $\A$ is positive definite if and only if $\A$ is strongly completely positive.
		\end{Thm}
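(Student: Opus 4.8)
The plan is to argue directly from a completely positive decomposition, using crucially that $m$ is even so that $F_\A$ is a nonnegative combination of $m$th powers of linear forms. Since $\A$ is completely positive, write $\A = \left(\vu^{(1)}\right)^m + \dots + \left(\vu^{(r)}\right)^m$ with each $\vu^{(k)} \ge \0$. Then $F_\A(\vx) = \A\vx^m = \sum_{k=1}^r \left(\left(\vu^{(k)}\right)^\top \vx\right)^m$, and because $m$ is even every summand is nonnegative; in particular $F_\A(\vx) = 0$ holds if and only if $\left(\vu^{(k)}\right)^\top \vx = 0$ for every $k$.

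For the ``if'' direction, suppose $\A$ is strongly completely positive, so in some such decomposition the vectors $\vu^{(1)}, \dots, \vu^{(r)}$ span $\mathbb{R}^n$. If $\vx \ne \0$ and $F_\A(\vx) = 0$, then by the observation above $\left(\vu^{(k)}\right)^\top \vx = 0$ for all $k$, hence $\vx$ is orthogonal to $\mathrm{span}\{\vu^{(1)}, \dots, \vu^{(r)}\} = \mathbb{R}^n$, forcing $\vx = \0$, a contradiction. Therefore $F_\A(\vx) > 0$ for all $\vx \ne \0$, i.e., $\A$ is positive definite.

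For the ``only if'' direction, suppose $\A$ is positive definite and fix any completely positive decomposition $\A = \sum_{k=1}^r \left(\vu^{(k)}\right)^m$ with $\vu^{(k)} \ge \0$. If $\mathrm{span}\{\vu^{(1)}, \dots, \vu^{(r)}\} \ne \mathbb{R}^n$, choose $\vx \ne \0$ orthogonal to all the $\vu^{(k)}$; then $F_\A(\vx) = \sum_{k=1}^r \left(\left(\vu^{(k)}\right)^\top \vx\right)^m = 0$, contradicting positive definiteness. Hence the $\vu^{(k)}$ span $\mathbb{R}^n$ and $\A$ is strongly completely positive. This in fact proves a little more: when $\A$ is positive definite, the generators of \emph{every} completely positive decomposition of $\A$ span $\mathbb{R}^n$.

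I do not expect a genuine obstacle here; the proof is short once the right identity is in hand. The only points needing care are that evenness of $m$ is exactly what makes each $\left(\left(\vu^{(k)}\right)^\top \vx\right)^m$ nonnegative (so a zero of $F_\A$ must annihilate every linear form simultaneously, rather than allowing cancellation), and that ``strongly completely positive'' is an existence assertion — which the ``only if'' direction delivers by showing the span property for an arbitrary decomposition.
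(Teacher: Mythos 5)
Your proposal is correct and follows essentially the same argument as the paper: expand $F_\A(\vx)=\sum_k\bigl(\bigl(\vu^{(k)}\bigr)^\top\vx\bigr)^m$, use evenness of $m$ to make each summand nonnegative, and translate positive definiteness into the spanning property of the generators (the paper phrases the converse as a contrapositive, you as a contradiction, which is the same reasoning). Your added remark that every completely positive decomposition of a positive definite $\A$ has spanning generators is a harmless strengthening of what the paper states.
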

		\begin{proof}
			
			Suppose that $\A$ is completely positive with the form (\ref{cp}).
			
			Suppose that $\A$ is strongly completely positive. Then for any $\vx \not = \0$, in (\ref{FA}), there is at least one $k$ such that
			$$\left(\left(\vu^{(k)}\right)^\top \vx\right)^m > 0,$$
			as $\left\{ \vu^{(1)}, \dots, \vu^{(r)} \right\}$ spans ${\mathbb R}^n$.   This shows that $\A$ is positive definite.
			
			On the other hand, suppose that $\A$ is not strongly completely positive, i.e.,  $\left\{ \vu^{(1)}, \dots, \vu^{(r)} \right\}$ does not span ${\mathbb R}^n$.  Then, we may find $\vx \in {\mathbb R}^n$, $\vx \not = \0$, such that $\left(\vu^{(k)}\right)^\top \vx = 0$ for $k = 1, \dots, r$.  This results $F_\A(\vx) = 0$, i.e., $\A$ is not positive definite.
		\end{proof}
		
		This theorem may also be resulted by Theorem 3.9 of \cite{LQ16}, and the main content of spectral theory of symmetric tensors stated before Theorem \ref{Thm:even}.
		
		We now present the main result of this section.
		
		\begin{Thm} \label{EvenPD}
			Even order Pascal tensors are positive definite.
		\end{Thm}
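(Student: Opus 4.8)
The plan is to realize $\cal P$ as a moment tensor of an explicit one‑parameter family of nonnegative vectors and then read off positive definiteness from the fact that a nonzero univariate polynomial has only finitely many zeros; in particular this route needs no information about the determinant $f(m,n)$.

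\textbf{Step 1 (an integral representation).} Put $N=(i_1-1)+\dots+(i_m-1)=i_1+\dots+i_m-m$. From $N!=\int_0^\infty t^N e^{-t}\,dt$ and $t^N=\prod_{k=1}^m t^{\,i_k-1}$ one gets
$$ {\cal P}_{i_1\dots i_m}=\int_0^\infty\ \prod_{k=1}^m\frac{t^{\,i_k-1}}{(i_k-1)!}\ e^{-t}\,dt . $$
Writing $\vv(t)=\bigl(1,\,t,\,\tfrac{t^{2}}{2!},\dots,\tfrac{t^{\,n-1}}{(n-1)!}\bigr)^\top$, this says ${\cal P}=\int_0^\infty \vv(t)^{\otimes m}\,e^{-t}\,dt$, a ``continuous'' completely positive decomposition of $\cal P$. (Discretizing this integral recovers Proposition~6.19 of \cite{QL17} and Theorem~\ref{Thm:even}, the SOS part coming from positive semidefiniteness of the $e^{-t}$-Gram matrix in the degree-$m/2$ monomials.)

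\textbf{Step 2 (contract with $\vx$).} Interchanging the finite sum with the integral gives, for every $\vx\in{\mathbb R}^n$,
$$ F_{\cal P}(\vx)={\cal P}\vx^m=\int_0^\infty\bigl(\vv(t)^\top\vx\bigr)^m e^{-t}\,dt=\int_0^\infty g_{\vx}(t)^m\,e^{-t}\,dt,\qquad g_{\vx}(t):=\sum_{i=1}^n\frac{x_i}{(i-1)!}\,t^{\,i-1}. $$
Since $m$ is even the integrand is nonnegative, so $F_{\cal P}(\vx)\ge 0$; moreover $F_{\cal P}(\vx)=0$ forces $g_{\vx}(t)=0$ for almost every $t\ge 0$, but a nonzero polynomial has only finitely many zeros, so $g_{\vx}\equiv 0$, hence (the powers $t^{0},\dots,t^{\,n-1}$ being linearly independent) $x_i=0$ for all $i$, i.e.\ $\vx=\0$. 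Therefore $F_{\cal P}(\vx)>0$ whenever $\vx\ne\0$, which is exactly the assertion. Equivalently, $\{\vv(t):t\ge 0\}$ spans ${\mathbb R}^n$ (a Vandermonde argument on $n$ distinct nodes), so any finite CP decomposition of $\cal P$ has spanning factors, $\cal P$ is strongly completely positive, and Theorem~\ref{Thm:evenPD} yields the same conclusion.

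An alternative presentation, matching the ``inherence'' principle advertised in the introduction, is induction on the order through Theorem~\ref{Thm:evenPD}: the base case $m=2$ is the classical positive definiteness of the Pascal matrix, and the step uses that Pascal tensors of consecutive orders are generated by the \emph{same} vector family $\{\vv(t)\}$ --- equivalently, the order-$m$ Pascal tensor is the slice $i_{m+1}=1$ of the order-$(m+1)$ one --- so the spanning property propagates along $2\to 3\to 4\to\cdots$. In both presentations the computations are routine; the only genuinely nontrivial point is Step 1, i.e.\ spotting that $\cal P$ is the $m$th moment tensor of the density $e^{-t}\,dt$ pushed forward by $t\mapsto\vv(t)$. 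Once that is in hand, positive definiteness is essentially automatic.
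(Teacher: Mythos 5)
Your proof is correct, but it follows a genuinely different route from the paper. The paper argues by induction on the order: the base case is the classical positive definiteness of the Pascal matrix, and the inductive step fixes the first two indices $i_1=i_2=1$ of the $(m+2)$nd order tensor, observes that the resulting slice is again the $m$th order Pascal tensor, and uses the equivalence ``even order completely positive $+$ positive definite $\Leftrightarrow$ strongly completely positive'' (Theorem \ref{Thm:evenPD}) to propagate the spanning property of the CP factors from order $m$ to order $m+2$; complete positivity itself is imported from Proposition~6.19 of \cite{QL17}. You instead exhibit the explicit moment representation ${\cal P}=\int_0^\infty \vv(t)^{\otimes m}e^{-t}\,dt$ with $\vv(t)=\left(1,t,\dots,\tfrac{t^{n-1}}{(n-1)!}\right)^\top$, so that $F_{\cal P}(\vx)=\int_0^\infty g_{\vx}(t)^m e^{-t}\,dt$, and positive definiteness drops out from the fact that a nonzero polynomial has finitely many zeros. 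Your computation of the representation is right (it is exactly $N!=\int_0^\infty t^N e^{-t}dt$ with $N=i_1+\dots+i_m-m$), the finite sum/integral interchange is harmless, and the vanishing argument is sound; as a bonus your argument is self-contained (it does not need the cited complete positivity result, nor even the positive definiteness of Pascal matrices as an input, which it reproves for $m=2$) and it simultaneously yields strong complete positivity of odd order Pascal tensors, i.e.\ Theorem \ref{scp}. What the paper's approach buys instead is generality: the induction uses only the slicing (``inherence'') structure and positive definiteness of the member matrices, so it extends verbatim to other completely positive families (Theorem \ref{Thm:class_SCP}, generalized Pascal and fractional Hadamard power tensors), where no explicit integral representation may be available. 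One small remark on your closing sketch of the inductive alternative: the paper's even-order induction steps by two (fixing two indices keeps the slice of even order), rather than $2\to3\to4\to\cdots$; your chain of single steps can be made to work, but it needs the odd-order strong-CP statement as an intermediate link, which is how the paper organizes Theorems \ref{EvenPD} and \ref{scp}.
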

		\begin{proof}
			Fix $n \ge 2$.  We prove this by induction on $m$.
			
			For $m=2$, we know that Pascal matrices are positive definite.
			
			Suppose that the $m$th order $n$-dimensional Pascal tensor is positive definite, where $m$ is even.
			Consider the $(m+2)$th order $n$-dimensional Pascal tensor $\cal P$.  Since $\cal P$ is a completely positive tensor, we have nonnegative vectors $\vu^{(1)}, \dots, \vu^{(r)} \in {\mathbb R}^n$ such that
			\begin{equation} \label{Pcp}
				{\cal P} = \left(\vu^{(1)}\right)^{m+2} + \dots + \left(\vu^{(r)}\right)^{m+2}.
			\end{equation}
			Now, fix the first two indices of the entries of $\cal P$ as $i_1 = i_2 \equiv 1$.  Then we obtain an $m$th order $n$-dimensional sub-tensor of $\cal P$.  Denote it as ${\cal P}_{11}$.  Then, we have
			$${\cal P}_{11} = \left(u_1^{(1)}\right)^2\left(\vu^{(1)}\right)^m + \dots + \left(u_1^{(r)}\right)^2\left(\vu^{(r)}\right)^m,$$
			i.e.,
			\begin{equation} \label{P11cp}
				{\cal P}_{11} = \left(\vv^{(1)}\right)^m + \dots + \left(\vv^{(r)}\right)^m,
			\end{equation}
			where
			$$\vv^{(k)} = \left(u_1^{(k)}\right)^{2 \over m}\vu^{(k)},$$
			are nonnegative vectors, for $k = 1, \dots, r$.  By the structure of Pascal tensors, ${\cal P}_{11}$ is nothing else, but the $m$th order $n$-dimensional Pascal tensor.  Then (\ref{P11cp}) is its form as a completely positive tensor.   By our induction assumption, ${\cal P}_{11}$ is positive definite.  By Theorem \ref{Thm:evenPD}, ${\cal P}_{11}$ is strongly completely positive.  Thus, $\left\{\vv^{(1)}, \dots, \vv^{(r)} \right\}$ spans ${\mathbb R}^n$.  Then, $\left\{\vu^{(1)}, \dots, \vu^{(r)} \right\}$ also spans ${\mathbb R}^n$.   This implies that $\cal P$ is strongly completely positive.  By Theorem \ref{Thm:evenPD}, $\cal P$ is positive definite.  This completes our induction proof.
		\end{proof}
		
		
		
		
		

		\section{Pascal Tensors Are Strongly Completely Positive}

		We have the following theorem.
		
		\begin{Thm} \label{scp}
			Pascal tensors are strongly completely positive.
		\end{Thm}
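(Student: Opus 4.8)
The plan is to deduce the full statement from the even-order result already established. If $m$ is even there is nothing new to do: by Proposition 6.19 of \cite{QL17} an even order Pascal tensor $\cal P$ is completely positive, by Theorem \ref{EvenPD} it is positive definite, and hence by Theorem \ref{Thm:evenPD} it is strongly completely positive. So the substantive case is $m$ odd, and the key observation is that deleting one index from an odd order Pascal tensor returns the even order Pascal tensor of one order lower, whose strong complete positivity is already in hand — this is the same device as in the proof of Theorem \ref{EvenPD}, but dropping one order rather than two.

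Now let $m$ be odd, so $m \ge 3$. Using complete positivity, write $\cal P$ in the form (\ref{cp}) of order $m$, that is ${\cal P} = (\vu^{(1)})^m + \dots + (\vu^{(r)})^m$ with every $\vu^{(k)} \ge \0$; by the definition of a strongly completely positive tensor it suffices to show that $\{\vu^{(1)}, \dots, \vu^{(r)}\}$ spans ${\mathbb R}^n$. Fix the first index $i_1 = 1$ and let ${\cal P}_1$ be the resulting $(m-1)$-th order $n$-dimensional subtensor of $\cal P$. From the factorial formula (\ref{pascal}) — the value $i_1 = 1$ contributes the factor $(i_1-1)! = 1$ in the denominator and replaces $i_1 + \dots + i_m - m$ by $i_2 + \dots + i_m - (m-1)$ — one sees that ${\cal P}_1$ is exactly the $(m-1)$-th order $n$-dimensional Pascal tensor; and from the decomposition of $\cal P$ one reads off
$${\cal P}_1 = \sum_{k\,:\,u_1^{(k)} > 0} (\vv^{(k)})^{m-1}, \qquad \vv^{(k)} = (u_1^{(k)})^{1/(m-1)}\,\vu^{(k)} \ge \0,$$
where the terms with $u_1^{(k)} = 0$ simply vanish. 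Since $m-1$ is even, Theorem \ref{EvenPD} gives that ${\cal P}_1$ is positive definite, so $F_{{\cal P}_1}(\vx) = \sum_{k\,:\,u_1^{(k)}>0}\left((\vv^{(k)})^\top\vx\right)^{m-1} > 0$ for every $\vx \ne \0$; as $m-1$ is even, this forces $(\vv^{(k)})^\top\vx \ne 0$ for at least one such $k$. Because each $\vv^{(k)}$ is a positive scalar multiple of $\vu^{(k)}$, this says that no nonzero $\vx$ is orthogonal to all of the $\vu^{(k)}$; hence $\{\vu^{(1)}, \dots, \vu^{(r)}\}$ spans ${\mathbb R}^n$ and $\cal P$ is strongly completely positive.

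I do not expect a serious obstacle: once one notices that fixing $i_1 = 1$ turns an odd order Pascal tensor into the even order Pascal tensor of one order lower, Theorems \ref{EvenPD} and \ref{Thm:evenPD} finish the job. The only points needing a line of care are (i) checking that ${\cal P}_1$ really is the lower-order Pascal tensor, which is the one-line computation with (\ref{pascal}) indicated above, and (ii) the bookkeeping of discarding the indices $k$ with $u_1^{(k)} = 0$ before taking $(m-1)$-st roots, which is why the sums run over $\{k : u_1^{(k)} > 0\}$ rather than over all of $\{1, \dots, r\}$. One could alternatively try an induction on $m$ in the style of Theorem \ref{EvenPD} — fixing $i_1 = i_2 = 1$ to drop two orders at a time — but that breaks down at the base case $m = 3$, where the order-one Pascal ``tensor'' is the all-ones vector, which does not span ${\mathbb R}^n$; the direct reduction to the even case sidesteps this.
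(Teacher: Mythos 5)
Your proof is correct, and it rests on the same key reduction as the paper's: fixing $i_1 = 1$ in an odd order Pascal tensor yields the Pascal tensor of one lower (even) order, whose positive definiteness (Theorem \ref{EvenPD}) then does the work. The difference is in the final step. The paper argues by contradiction: if $\cal P$ (of odd order $m$) were not strongly completely positive, Theorem 3.9 of \cite{LQ16} would give a zero H-eigenvalue, i.e.\ some $\vx \ne \0$ with ${\cal P}\vx^{m-1} = \0$; reading off the first component gives ${\cal P}_1\vx^{m-1} = 0$, contradicting positive definiteness of ${\cal P}_1$. You instead work directly with a completely positive decomposition of $\cal P$, push it down to ${\cal P}_1$ (the rescaling $\vv^{(k)} = (u_1^{(k)})^{1/(m-1)}\vu^{(k)}$ and the discarding of terms with $u_1^{(k)} = 0$ are both fine, and in fact those terms could be kept as zero vectors harmlessly), and use positive definiteness of ${\cal P}_1$ to conclude that the $\vu^{(k)}$ span $\Re^n$ — exactly in the spirit of the paper's proofs of Theorem \ref{Thm:evenPD} and of the induction step of Theorem \ref{EvenPD}. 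What your route buys is self-containedness: you avoid citing the eigenvalue characterization of strong complete positivity from \cite{LQ16}, at the cost of a little bookkeeping; the paper's route is shorter given that citation. Your closing remark about why a drop-two-orders induction would fail at the odd base case is a correct side observation.
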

		\begin{proof}
			By Theorems  \ref{Thm:evenPD} and \ref{EvenPD}, even order Pascal tensors are completely positive.   We now consider odd order Pascal tensors.
			
			Suppose that $\cal P$ is the $(m+1)$th order $n$-dimensional Pascal tensor, where $m, n \ge 2$ and $m$ is even.  Assume that it is not strongly completely positive.  By Theorem 3.9 of \cite{LQ16}, $\cal P$ must have a zero H-eigenvalue.   Thus, we have $\vx \in {\mathbb R}^n$, $\vx \not = \0$, such that ${\cal P}\vx^m = \0$.  This implies that ${\cal P}_1 \vx^m = 0$, where ${\cal P}_1$ is the $m$th order $n$-dimensional tensor obtained by fixing the first index of the entries of $\cal P$ as $i_1 \equiv 1$.  By the structure of Pascal tensors, ${\cal P}_1$ is exactly the $m$th order $n$-dimensional Pascal tensor.   Then, by the definition of even order positive definite tensors, ${\cal P}_1 \vx^m = 0$ and $\vx \not = \0$ implies that ${\cal P}_1$ is not positive definite.   This conflicts with Theorem \ref{EvenPD}.  Hence, $\cal P$ must be strongly completely positive.
		\end{proof}
		
		
		
	}

	{\section{An Inherence Property for Some  Completely Positive Tensor Families}	
		
		Actually, Pascal tensors are prototypes of some families of completely positive tensors.   The study in Sections 2 and 3 shows that since Pascal matrices are positive definite and the construction of Pascal tensors satisfies certain rules, even order Pascal tensors are positive definite, and odd order Pascal tensors are strongly completely positive.   The significance is that this inherence property  may hold for some other completely positive tensor families.   In this section, we show that for a certain completely positive tensor family, if its construction satisfies certain rules, then even order tensors in this family would be positive definite, and odd order tensors are strongly completely positive, as long as matrices in this family are positive definite.
		
		Actually, such an inherence property reminds us strong Hankel tensors \cite{QL17}.  In general, even order Hankel tensors may not be positive definite.   However, if the Hankel matrix, generated by the same vector which generates a Hankel tensor, is positive semi-definite, then that Hankel tensor is called a strong Hankel tensor.  We know that even order strong Hankel tensors are positive semi-definite and SOS tensors \cite{QL17}.
		
		We have the following inherence theorem for completely positive tensors.
		
		\begin{Thm} {\bf (Inherence Theorem)} \label{Thm:class_SCP}
			Suppose that we have a family $C$ of completely positive tensors.   Assume that $C$ satisfies the following three properties.
			\begin{itemize}
				\item[1.]   For $m, n \ge 2$, there is at least one member in $C$.
				
				\item[2.]  For $m = 2$, all the member matrices in $C$ are positive definite.
				
				\item[3.]  If we fix the first index of the entries of an $m$th order $n$-dimensional tensor $\A$ in $C$ as $i_1 = 1$, then we have a sub-tensor $\A_1$ of $\A$.  Then $\A_1 \in C$.
			\end{itemize}

			Then all the tensors in $C$ are strongly completely positive, and all the even order tensors in $C$ are positive definite.
		\end{Thm}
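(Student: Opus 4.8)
The plan is to follow the template of Sections~2 and 3, with the concrete combinatorial structure of Pascal tensors replaced by the abstract closure hypothesis~3. I would first prove, by induction on the even order $m$, the auxiliary statement $A(m)$: \emph{every $m$th order member of $C$ is positive definite}. Granting $A(m)$ for all even $m$, strong complete positivity of the even order members is immediate from Theorem~\ref{Thm:evenPD}, and the odd order members are then handled by one further reduction.

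For the base case $m=2$ of the induction, hypothesis~2 gives that every member matrix of $C$ is positive definite, so $A(2)$ holds. For the inductive step, assume $A(m)$ for an even $m$ and take an $(m+2)$th order $n$-dimensional tensor $\A \in C$. By complete positivity, $\A = \sum_{k=1}^r (\vu^{(k)})^{m+2}$ with $\vu^{(k)} \ge \0$. Since hypothesis~3 permits fixing only one index at a time, I apply it twice: fixing $i_1 = 1$ produces $\A_1 \in C$ of order $m+1$, and fixing the first index of $\A_1$ produces $\A_{11} \in C$ of order $m$. Propagating the decomposition entrywise through these two substitutions gives $\A_{11} = \sum_{k=1}^r (\vv^{(k)})^m$ with $\vv^{(k)} = (u_1^{(k)})^{2/m}\vu^{(k)} \ge \0$, a valid complete-positivity decomposition of $\A_{11}$. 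By $A(m)$, the even order tensor $\A_{11}$ is positive definite, and --- as the proof of Theorem~\ref{Thm:evenPD} shows --- positive definiteness forces \emph{every} complete-positivity decomposition of an even order tensor to have spanning generators; hence $\{\vv^{(1)}, \dots, \vv^{(r)}\}$ spans $\mathbb{R}^n$. Because each $\vv^{(k)}$ is a nonnegative scalar multiple of $\vu^{(k)}$, the set $\{\vu^{(1)}, \dots, \vu^{(r)}\}$ spans a subspace containing that of $\{\vv^{(1)}, \dots, \vv^{(r)}\}$, hence spans $\mathbb{R}^n$; so $\A$ is strongly completely positive, and by Theorem~\ref{Thm:evenPD} positive definite. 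This establishes $A(m+2)$ and completes the induction.

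To finish, let $\A \in C$ have odd order $m+1 \ge 3$, with $\A = \sum_{k=1}^r (\vu^{(k)})^{m+1}$, $\vu^{(k)} \ge \0$. Fixing $i_1 = 1$ and invoking hypothesis~3 gives $\A_1 \in C$ of even order $m$, and carrying the decomposition along gives $\A_1 = \sum_{k=1}^r (\vv^{(k)})^m$ with $\vv^{(k)} = (u_1^{(k)})^{1/m}\vu^{(k)} \ge \0$. Since $\A_1$ is positive definite by the induction just completed, the same argument as above shows that $\{\vv^{(1)}, \dots, \vv^{(r)}\}$, and therefore $\{\vu^{(1)}, \dots, \vu^{(r)}\}$, spans $\mathbb{R}^n$; hence $\A$ is strongly completely positive. (Alternatively, one can reach the same conclusion via Theorem~3.9 of \cite{LQ16} and the spectral characterisation stated before Theorem~\ref{Thm:even}, exactly as in the proof of Theorem~\ref{scp}.) Hypothesis~1 is used only to guarantee that the assertion is not vacuous.

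The step I expect to be the crux is the \emph{upward} transfer of the spanning property: ``strongly completely positive'' asserts merely the existence of one spanning decomposition, so I must first isolate the sharper fact --- implicit in the proof of Theorem~\ref{Thm:evenPD} --- that an even order positive definite tensor admits \emph{no} non-spanning complete-positivity decomposition, and then couple it with the elementary observation that $\vv^{(k)}$ is always a nonnegative multiple of $\vu^{(k)}$, so that reinstating a fixed index can only enlarge the span. Everything else is bookkeeping on orders and indices; note in particular that, unlike the Pascal case, no structural identification of the sub-tensors is needed --- the induced decomposition follows directly from the definition of a sub-tensor.
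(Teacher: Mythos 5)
Your proposal is correct and is essentially the paper's approach: the paper proves this theorem only by remarking that the induction of Theorems \ref{EvenPD} and \ref{scp} carries over, and your argument is exactly that induction written out in full, with the two natural adaptations (applying hypothesis 3 twice, since it fixes only one index at a time, and making explicit the sharpening implicit in the proof of Theorem \ref{Thm:evenPD} that positive definiteness rules out \emph{any} non-spanning complete-positivity decomposition). Nothing in your write-up departs from, or falls short of, what the paper intends.
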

		{
			\begin{proof}
				The results may be obtained   through the same induction process as in {Theorems \ref{EvenPD} and \ref{scp}}. Hence, we omit the details for brevity.
			\end{proof}	
		}
		
		The followings are some examples to apply this theorem.
		
		{1. Generalized Pascal {tensors}.  Let  $\mathbf c=(c_1,\dots,c_n)\in\mathbb R_{+}^n$ be a nonnegative vector  and let the  $m$th order $n$-dimensional tensor $\A = \left(a_{i_1\dots i_m}\right)$ be defined by
			\begin{equation}
				a_{i_1\dots i_m} = \frac{\Gamma(c_{i_1}+c_{i_2}+\dots+c_{i_m}+1)}{\Gamma(c_{i_1}+1)\Gamma(c_{i_2}+1)\cdots \Gamma(c_{i_m}+1)},
			\end{equation}
			then $\A$ is a generalized Pascal tensor and $\mathbf c$ is   called a
			generating vector of the generalized Pascal tensor $\A$.   If $c_i=i-1$ for any $i=1\dots,n$, then $\A$ is the Pascal tensor  \cite{QL17}.
			
			\begin{Thm}\label{Thm:Generalized_Pascal}
				Suppose that $\mathbf c$ satisfies $c_1=0$ and the  generalized Pascal matrix is positive definite. Then the generalized Pascal tensors are strongly completely pos. Even order generalized Pascal tensors are positive definite.
			\end{Thm}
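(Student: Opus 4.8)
My plan is to deduce this directly from the Inherence Theorem (Theorem~\ref{Thm:class_SCP}). I would let $C$ be the family consisting of every generalized Pascal tensor --- of any order $m\ge 2$ and any dimension $n\ge 2$ --- whose generating vector $\vg=(g_1,\dots,g_n)\in{\mathbb R}_{+}^n$ has $g_1=0$ and whose associated generalized Pascal matrix (the second-order tensor built from the same $\vg$) is positive definite. Since generalized Pascal tensors are completely positive, $C$ is a family of completely positive tensors, and by the hypotheses the generalized Pascal tensors generated by the given $\mathbf c$ all belong to $C$; so it is enough to check the three conditions of Theorem~\ref{Thm:class_SCP} for $C$.

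Conditions~1 and~2 are immediate. For each $n\ge 2$ the ordinary Pascal vector $(0,1,\dots,n-1)$ has first entry $0$ and generates the ordinary Pascal matrix, which is positive definite; hence the ordinary $m$th order $n$-dimensional Pascal tensor (completely positive by Proposition~6.19 of \cite{QL17}) lies in $C$, giving Condition~1. Condition~2 holds because, by construction, the order-$2$ members of $C$ are exactly positive definite generalized Pascal matrices. Condition~3 is where the normalization $g_1=0$ enters: if $\A\in C$ is the $m$th order $n$-dimensional generalized Pascal tensor with generating vector $\vg$, then fixing $i_1=1$ and using $g_1=0$ together with $\Gamma(1)=1$ turns the entries of the sub-tensor $\A_1$ into
\[
a_{1\,i_2\dots i_m}=\frac{\Gamma(g_{i_2}+\dots+g_{i_m}+1)}{\Gamma(g_{i_2}+1)\cdots\Gamma(g_{i_m}+1)},
\]
which is precisely the $(m-1)$th order $n$-dimensional generalized Pascal tensor with the same generating vector $\vg$; its generating vector still has first entry $0$ and its associated matrix is unchanged, hence still positive definite, so $\A_1\in C$. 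With the three conditions verified, Theorem~\ref{Thm:class_SCP} gives that every tensor in $C$ is strongly completely positive and every even order tensor in $C$ is positive definite; specializing to the generating vector $\mathbf c$ yields the statement.

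The only ingredient lying outside the inherence machinery is the complete positivity of generalized Pascal tensors, which I would take as known rather than reprove here. Apart from that, the single point that genuinely needs care --- and the reason the hypothesis reads ``$c_1=0$'' rather than the weaker-looking ``$0$ is among the $c_i$'' --- is Condition~3: the reduction in Theorem~\ref{Thm:class_SCP} fixes the \emph{first} index, so it is the \emph{first} coordinate of the generating vector that must vanish in order for the reduced tensor to again be a generalized Pascal tensor (with the same, still positive definite, matrix) and therefore to remain in $C$.
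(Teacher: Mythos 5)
Your proposal is correct and follows essentially the same route as the paper: both reduce the statement to the Inherence Theorem (Theorem~\ref{Thm:class_SCP}) by citing Proposition~6.19 of \cite{QL17} for complete positivity, checking nonemptiness and the positive definiteness of the order-two members, and observing that fixing $i_1=1$ with $c_1=0$ (so $\Gamma(c_1+1)=1$) yields the $(m-1)$th order generalized Pascal tensor with the same generating vector. Your write-up is in fact slightly more explicit than the paper's about why the normalization $c_1=0$ is what makes the first-index reduction stay inside the family.
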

			\begin{proof}
				
				By Proposition~6.19 in \cite{QL17}, generalized Pascal tensors are   completely positive.
				
				Because there are  no points $z$ at which $\Gamma(z)=0$,  the generalized Pascal tensor class is nonempty for any $m,n\ge2$.
				Furthermore, by fixing $i_1=1$, we have $\A_1$ is   an $(m-1)$th order $n$ dimensional generalized Pascal tensor with the same generating vector $\mathbf c$. Then the results of this theorem follows  directly from Theorem~\ref{Thm:class_SCP}.
				
				This completes the proof.
			\end{proof}

			2. The   fractional Hadamard power   tensors.

			For any given nonnegative tensor $\A = (a_{i_1\dots i_m})$,  its
			fractional Hadamard power is defined by the form $\A^{\circ \alpha} = (a_{i_1\dots i_m}^{\alpha})$. For factional
			Hadamard powers, the complete positivity may not be preserved, as shown by a counterexample in Qi and Luo \cite{QL17}.

			\begin{Thm}
				Suppose that  $\alpha>0$  and $\mathbf c$ satisfy $c_1=0$ and the  fractional Hadamard power   of the generalized Pascal matrix is positive definite. Then the  fractional Hadamard power   of the  generalized Pascal tensor $\A^{\circ \alpha}$ is strongly completely positive.  If in addition, $m$ is  even, then $\A^{\circ \alpha}$ is  positive definite.
			\end{Thm}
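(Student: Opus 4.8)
The plan is to realize $\A^{\circ\alpha}$ as a member of a family $C$ to which the Inherence Theorem (Theorem~\ref{Thm:class_SCP}) applies. Fix $\alpha$ and a generating vector $\mathbf c$ of length $n$ with $c_1=0$, and let $C$ consist of the fractional $\alpha$-Hadamard powers $\B^{\circ\alpha}$ of the $n$-dimensional generalized Pascal tensors $\B$ of every order $m\ge 2$ built from $\mathbf c$. Two things then have to be checked: (a) every member of $C$ is a completely positive tensor, and (b) the three hypotheses of Theorem~\ref{Thm:class_SCP}. Granting these, Theorem~\ref{Thm:class_SCP} yields at once that every tensor in $C$ is strongly completely positive and that every even order tensor in $C$ is positive definite, which is exactly the assertion about $\A^{\circ\alpha}$.

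Part (b) is routine and parallels the proof of Theorem~\ref{Thm:Generalized_Pascal}. Since $\Gamma$ never vanishes, all entries are finite positive reals, so $C$ contains a tensor of each order $m\ge 2$. For $m=2$ the only member of $C$ is the $\alpha$-Hadamard power of the generalized Pascal matrix, which is positive definite by hypothesis. For the heredity property, fix $i_1=1$ in $\A^{\circ\alpha}$; because $c_1=0$ we have $\Gamma(c_1+1)=\Gamma(1)=1$, so the resulting $(m-1)$th order sub-tensor has entries
\[
\left(\frac{\Gamma(c_{i_2}+\dots+c_{i_m}+1)}{\Gamma(c_{i_2}+1)\cdots\Gamma(c_{i_m}+1)}\right)^{\alpha},
\]
i.e.\ it is precisely the $\alpha$-Hadamard power of the $(m-1)$th order $n$-dimensional generalized Pascal tensor with the same generating vector, hence lies in $C$.

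Part (a) is the real content and the main obstacle, since the discussion above warns that fractional Hadamard powers need not preserve complete positivity and the matrix hypothesis alone does not supply it. I would prove it through a moment representation. First, because
\[
a_{i_1\dots i_m}^{\alpha}=\Big(\prod_{k=1}^{m}\Gamma(c_{i_k}+1)^{-\alpha}\Big)\,\Gamma(c_{i_1}+\dots+c_{i_m}+1)^{\alpha},
\]
the tensor $\A^{\circ\alpha}$ is obtained from the tensor $\B$ with entries $\Gamma(c_{i_1}+\dots+c_{i_m}+1)^{\alpha}$ by the nonnegative multilinear diagonal scaling $d_i=\Gamma(c_i+1)^{-\alpha}$, and such scalings preserve complete positivity, so it suffices to treat $\B$. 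Next I would show that $z\mapsto\Gamma(z+1)^{\alpha}$ is a Stieltjes moment function, that is, $\Gamma(z+1)^{\alpha}=\int_{0}^{\infty}u^{z}\,d\nu_\alpha(u)$ for a probability measure $\nu_\alpha$ on $(0,\infty)$: raising the Weierstrass product $\Gamma(z+1)^{-1}=e^{\gamma z}\prod_{n\ge 1}(1+z/n)e^{-z/n}$ to the power $\alpha$ exhibits $\Gamma(z+1)^{\alpha}$ as the $z$-th moment of the positive random variable $W=e^{-\alpha\gamma}\prod_{n\ge 1}e^{\alpha/n}e^{-G_n}$ with the $G_n\sim\mathrm{Gamma}(\alpha,n)$ independent --- each factor $(1+z/n)^{-\alpha}$ being the Laplace transform of a $\mathrm{Gamma}(\alpha,n)$ law at $z$, and the product converging almost surely since $\sum_n\mathrm{Var}(\alpha/n-G_n)=\sum_n\alpha/n^{2}<\infty$. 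Then $\Gamma(c_{i_1}+\dots+c_{i_m}+1)^{\alpha}=\int_{0}^{\infty}\prod_{k=1}^{m}u^{c_{i_k}}\,d\nu_\alpha(u)$ displays $\B$ as a moment tensor relative to the nonnegative functions $u\mapsto u^{c_i}$, and a standard moment-tensor argument (closedness of the completely positive cone, together with Carath\'eodory applied to the finitely many exponents that occur) writes $\B=\sum_j w_j(\vv^{(j)})^{m}$ with $w_j>0$ and $\vv^{(j)}=(u_j^{c_1},\dots,u_j^{c_n})\ge\0$; hence $\B$, and therefore $\A^{\circ\alpha}$, is completely positive. The one genuinely non-elementary ingredient is this moment identity for $\Gamma(z+1)^{\alpha}$ --- it is what singles out generalized Pascal tensors, whose fractional Hadamard powers are always completely positive, from the general nonnegative tensors to which the counterexample of \cite{QL17} applies; everything else is bookkeeping. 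If one prefers to bypass the $\Gamma$-power moment fact, the alternative is to carry ``$\A^{\circ\alpha}$ is completely positive'' as an explicit standing hypothesis, after which the statement is an immediate instance of Theorem~\ref{Thm:class_SCP}.
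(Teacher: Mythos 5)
Your proposal is correct, and its skeleton coincides with the paper's: both reduce the statement to the Inherence Theorem (Theorem~\ref{Thm:class_SCP}), with the heredity check done exactly as you do it (fixing $i_1=1$ and using $c_1=0$, so $\Gamma(c_1+1)=1$, to see that the sub-tensor is the $\alpha$-Hadamard power of the $(m-1)$th order generalized Pascal tensor with the same generating vector), and with the $m=2$ case supplied by the positive definiteness hypothesis. The genuine difference is in how complete positivity of $\A^{\circ\alpha}$ is obtained: the paper simply cites Corollary~6.22 of \cite{QL17} and then invokes Theorems~\ref{Thm:Generalized_Pascal} and \ref{Thm:class_SCP}, whereas you re-prove that ingredient from scratch, first reducing by the positive diagonal scaling $d_i=\Gamma(c_i+1)^{-\alpha}$ (which indeed preserves complete positivity, since it replaces each $\vu^{(j)}$ by $D\vu^{(j)}$ with $D$ a nonnegative diagonal matrix) and then exhibiting $\Gamma(z+1)^{\alpha}$ as a Stieltjes moment function via the Weierstrass product and an infinite product of independent Gamma variables, so that $\B$ becomes an integral of nonnegative rank-one tensors. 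Your route buys self-containedness --- it effectively supplies a proof of the cited corollary, and it isolates exactly which analytic property of $\Gamma$ makes the counterexample in \cite{QL17} inapplicable --- at the cost of length; the paper's route is two lines but rests entirely on the external reference. One detail you should make explicit to close your probabilistic step: almost sure convergence of the product (from $\sum_n \alpha/n^2<\infty$) is not by itself enough to conclude $E[W^z]=\Gamma(z+1)^{\alpha}$; you also need to pass the expectation through the limit, e.g.\ by noting that the partial products are bounded in $L^2$ because the same Weierstrass identity evaluated at $2z$ shows their second moments converge to $e^{2\alpha\gamma z}\Gamma(2z+1)^{\alpha}<\infty$, which gives uniform integrability. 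With that routine addition (and the standard closedness-of-the-cone argument you already sketch for turning the integral representation into a finite completely positive decomposition), your proof is complete.
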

			\begin{proof}
				By Corollary~6.22 in \cite{QL17}, the  fractional Hadamard power   of the  generalized Pascal tensor is   completely positive.
				Then the conclusions of this theorem follow from Theorems~\ref{Thm:Generalized_Pascal} and \ref{Thm:class_SCP}.
			\end{proof}
			
			\begin{Thm}
				Let $\A=(a_{i_1\dots i_m})\in S_{m,n}$ be a generalized Hilbert tensor defined by
				\begin{equation*}
					a_{i_1\dots i_m}=\frac{1}{i_1+\dots +i_m - m + c},
				\end{equation*}
				where $c>0$ is a positive number.
				Suppose that $\alpha>0$ and $c$ satisfy $c_1=0$ and the  fractional Hadamard power   of the generalized Hilbert matrix  is positive definite. Then the fractional Hadamard power   of the generalized Hilbert  tensor  $\A^{\circ \alpha}$ is strongly completely positive. If in addition, $m$ is  even, then  $\A^{\circ \alpha}$ is positive definite.
			\end{Thm}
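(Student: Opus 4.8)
The plan is to derive this theorem from the Inherence Theorem (Theorem~\ref{Thm:class_SCP}), applied to the family $C$ of all fractional Hadamard powers $\A^{\circ\alpha}$ of generalized Hilbert tensors with the prescribed fixed parameters $\alpha>0$ and $c>0$, the order $m\ge2$ and dimension $n\ge2$ being allowed to vary. To invoke Theorem~\ref{Thm:class_SCP} one must verify its three hypotheses for $C$: that every member is completely positive; that $C$ contains a member for each admissible pair $(m,n)$ and that its second-order members are positive definite; and that $C$ is closed under fixing the first index. Granting these, the Inherence Theorem yields at once that every member of $C$ is strongly completely positive and that the even order members are positive definite, which is the assertion.

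Two of the three checks are routine. Since $c>0$, we have $i_1+\dots+i_m-m+c\ge c>0$ for every index tuple, so $\A$ and $\A^{\circ\alpha}$ are well defined for all $m,n\ge2$; this is hypothesis~1, while hypothesis~2, positive definiteness in the matrix case, is exactly what we have assumed. For hypothesis~3, fixing the first index as $i_1=1$ in $\A^{\circ\alpha}$ produces the $(m-1)$th order $n$-dimensional tensor whose entries are $\bigl(i_2+\dots+i_m-(m-1)+c\bigr)^{-\alpha}$; this is precisely the fractional Hadamard power, with the same exponent $\alpha$, of the generalized Hilbert tensor of order $m-1$ with the same parameter $c$, hence again a member of $C$.

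The substance of the proof is the first hypothesis: complete positivity is not inherited by fractional Hadamard powers in general, so it must be argued directly for this family. Here I would use the Laplace integral representation $s^{-\alpha}=\frac{1}{\Gamma(\alpha)}\int_0^\infty t^{\alpha-1}e^{-st}\,dt$, valid for $s>0$ and $\alpha>0$. Taking $s=i_1+\dots+i_m-m+c$ and factoring $e^{-st}=e^{-ct}\prod_{k=1}^m e^{-(i_k-1)t}$ gives
\[
\bigl(\A^{\circ\alpha}\bigr)_{i_1\dots i_m}=\frac{1}{\Gamma(\alpha)}\int_0^\infty t^{\alpha-1}e^{-ct}\prod_{k=1}^m e^{-(i_k-1)t}\,dt,
\]
that is, $\A^{\circ\alpha}=\int_0^\infty w(t)\,\bigl(\vu(t)\bigr)^m\,dt$ entrywise, with the nonnegative weight $w(t)=\frac{1}{\Gamma(\alpha)}t^{\alpha-1}e^{-ct}$ and the strictly positive generating vector $\vu(t)=(1,e^{-t},\dots,e^{-(n-1)t})^\top$. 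The integral converges --- the factor $t^{\alpha-1}$ is integrable near $0$ because $\alpha>0$, and $e^{-ct}$ forces decay at infinity --- and its Riemann sums exhibit $\A^{\circ\alpha}$ as a limit of finite nonnegative combinations $\sum_j w_j(\vu_j)^m$, which, after absorbing $w_j$ into the vector as $w_j^{1/m}\vu_j\ge\0$, lie in the completely positive cone. Since that cone is closed in $S_{m,n}$ \cite{QL17}, $\A^{\circ\alpha}$ is completely positive; alternatively one may cite a Hilbert-tensor analogue of Corollary~6.22 of \cite{QL17}. With all three hypotheses in hand, Theorem~\ref{Thm:class_SCP} completes the proof. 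The genuine obstacle is exactly this complete-positivity step; the Laplace representation is the device that handles it uniformly for every $\alpha>0$.
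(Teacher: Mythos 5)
Your proposal is correct and its skeleton is exactly the paper's: verify the three hypotheses of the Inherence Theorem (Theorem~\ref{Thm:class_SCP}) for the family of fractional Hadamard powers of generalized Hilbert tensors with fixed $\alpha$ and $c$, noting in particular that fixing $i_1=1$ reproduces the same family with order lowered by one and the same parameter $c$. The only real divergence is in the complete-positivity step. The paper observes that the generalized Hilbert tensor is a Cauchy tensor with generating entries $c_i=i-1+\frac{c}{m}$ and then simply cites Theorem~6.21 of \cite{QL17}, which asserts that fractional Hadamard powers of such (positive) Cauchy tensors are completely positive. You instead prove this from scratch via the Laplace representation $s^{-\alpha}=\frac{1}{\Gamma(\alpha)}\int_0^\infty t^{\alpha-1}e^{-st}\,dt$, writing $\A^{\circ\alpha}$ as an integral of nonnegative rank-one terms $(\vu(t))^m$ with $\vu(t)=(1,e^{-t},\dots,e^{-(n-1)t})^\top$, and then passing to Riemann sums and using the closedness of the completely positive cone. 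This is essentially a re-derivation of the cited result (and is, in spirit, how such Cauchy-tensor results are proved), so your argument buys self-containedness at the cost of length; as a side benefit, your representation with distinct vectors $\vu(t)$ would even give strong complete positivity directly, though the Inherence Theorem already supplies that. Like the paper, you correctly treat the stray hypothesis ``$c_1=0$'' as vacuous for this family. No gap.
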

			\begin{proof}
				The generalized Hilbert tensor is a Cauchy tensor with $c_i = i-1+\frac{c}{m}$ for any $i=1,\dots,n$.
				By Theorem~6.21 in \cite{QL17}, the fractional Hadamard powers   of the generalized Hilbert tensors are   completely positive.
				
				It follows from the definition that the generalized Hilbert tensor is well defined  for any $m,n\ge2$.
				Furthermore, by fixing $i_1=1$, we have $\A_1$ is   an $(m-1)$th order $n$ dimensional generalized Hilbert tensor with the same  parameter  $c$. Then the results of this theorem follow  directly from  Theorem~\ref{Thm:class_SCP}.
				
				This completes the proof.
			\end{proof}
			
		}
		
		%

	}
	
	{\section{Determinants: The Case That $n=2$}}

	By the Sylvester Formula \cite{GKZ94, Qi05} and (\ref{pascal}), for $m \ge 3$,  we have	$f(m, 2) =$ det$(S_m)$, {where
		\begin{eqnarray*}
			S_m	&= &\left(\begin{array} {cccccccccc}
				1 & a_1 & a_2& \cdots &a_{m-2} & a_{m-1} & 0 & 0  & \cdots & 0 \\
				0 & 1   &a_1& \cdots &a_{m-3}& a_{m-2} & a_{m-1} & 0 & \cdots & 0 \\
				\cdot & \cdot &\cdot &  \cdots & \cdot & \cdot & \cdot & \cdot & \cdots & \cdot \\
				0 & 0 &0&\cdots  &  1&  a_1 &a_2& a_3 & \cdots   & a_{m-1} \\
				1 & b_1 & b_2& \cdots &b_{m-2} & b_{m-1} & 0 & 0  & \cdots & 0 \\
				0 & 1   &b_1& \cdots &b_{m-3}& b_{m-2} & b_{m-1} & 0 & \cdots & 0 \\
				\cdot & \cdot &\cdot &  \cdots & \cdot & \cdot & \cdot & \cdot & \cdots & \cdot \\
				0 & 0 &0&\cdots  &  1&  b_1 &b_2& b_3 & \cdots   & b_{m-1} \\
			\end{array}\right),
		\end{eqnarray*}
		$a_i=\left({m-1 \atop i}\right)i!$ and $b_i=\left({m-1 \atop i}\right)(i+1)!$ for $i=1,\dots,m-1$.}
	We call the $2(m-1) \times 2(m-1)$ matrix $S_m$ a Sylvester-Pascal matrix.
	
	By direct computation, we have
	$$f(3,2) = \left|\begin{array}{cccc}
		1 & 2 & 2 & 0  \\
		0 & 1 & 2 & 2\\
		1 & 4 & 6 & 0\\
		0 & 1 & 4 & 6
	\end{array}\right| = 8 = (2!)^3,$$
	$$f(4,2)= \left|\begin{array}{cccccc}
		1 & 3 & 6 & 6 & 0 & 0 \\
		0 & 1 & 3 & 6 & 6 & 0 \\
		0 & 0 & 1 & 3 & 6 & 6 \\
		1 & 6 & 18 & 24 & 0 & 0 \\
		0 & 1 & 6 & 18 & 24 & 0 \\
		0 & 0 & 1 & 6 & 18 & 24
	\end{array}\right| = 6^4 = (3!)^4$$
	and
	$$f(5,2)= \left|\begin{array}{cccccc}
		1 & 3 & 6 & 6 & 0 & 0 \\
		0 & 1 & 3 & 6 & 6 & 0 \\
		0 & 0 & 1 & 3 & 6 & 6 \\
		1 & 6 & 18 & 24 & 0 & 0 \\
		0 & 1 & 6 & 18 & 24 & 0 \\
		0 & 0 & 1 & 6 & 18 & 24
	\end{array}\right| = 24^5 = (4!)^5$$
	
	From the above fact, we may think that $f(m, 2) = \left[(m-1)!\right]^m$.
	
	Aiming at proving this, we consider the LU decomposition of the Sylvester-Pascal matrix.
	We first define the row permutation matrix $P_m$  to reshape $S_m$ as follows,
	{\begin{eqnarray*}
			T_m \equiv  P_mS_m =	\left(\begin{array} {cccccccccc}
				1 & b_1 & b_2& \cdots &b_{m-2} & b_{m-1} & 0 & 0  & \cdots & 0 \\
				1 & a_1 & a_2& \cdots &a_{m-2} & a_{m-1} & 0 & 0  & \cdots & 0 \\
				0 & 1   &b_1& \cdots &b_{m-3}& b_{m-2} & b_{m-1} & 0 & \cdots & 0 \\
				0 & 1   &a_1& \cdots &a_{m-3}& a_{m-2} & a_{m-1} & 0 & \cdots & 0 \\
				\cdot & \cdot &\cdot &  \cdots & \cdot & \cdot & \cdot & \cdot & \cdots & \cdot \\
				0 & 0 &0&\cdots  &   1&  b_1 &b_2& b_3 & \cdots   & b_{m-1} \\
				0 & 0 &0&\cdots  &   1&  a_1 &a_2& a_3 & \cdots   & a_{m-1} \\
			\end{array}\right),
		\end{eqnarray*}
		where $a_i=\left({m-1 \atop i}\right)i!$ and $b_i=\left({m-1 \atop i}\right)(i+1)!$ for $i=1,\dots,m-1$.}
	Then {the  leading principal minors are not zeros and} the LU decomposition of $T_m$ always exists.
	To move the $m$-th row to the first row {while keeping the order of the first to $(m-1)$-th rows unchanged}, it requires $(m-1)$-th row switches, and to move the $(m+1)$-th row to the third row {while keeping the order of the third to $m$-th rows unchanged}, it requires $(m-2)$-th row switches. We repeat this process. It requires totally  $\frac{m(m-1)}{2}$ times of row permutations to formulate $PS_m$ stated above. Therefore, ${\rm det}(S_m) = (-1)^{\frac{m(m-1)}{2}}{\rm det}(T_m)$.

	Let  $$T_m = L_mU_m,$$ where $L_m$ is a lower triangular matrix with all $1$ diagonal entries.   Then $f(m, 2)$ is equal to the product of the diagonal entries of the upper triangular matrix $U_m$ {multiplied by $(-1)^{\frac{m(m-1)}{2}}$.}   Here are the forms of $U_m$ for $m = 3, 4, 5$:
	$$U_3 = \begin{bmatrix} 1 & 4 & 6 & 0 \\ 0 & -2 & -4 & 0 \\ 0 & 0 & 2 & 6 \\ 0 & 0 & 0 & 2 \end{bmatrix},$$
	$$U_4 = \begin{bmatrix} 1 & 6 & 18 & 24 & 0 & 0 \\ 0 & -3 & -12 & -18& 0 & 0 \\ 0 & 0 & 2 & 12 & 24 & 0  \\ 0 & 0 & 0 & 6 & 18 & 0 \\ 0 & 0 & 0 & 0 & 6 & 24 \\ 0 & 0 & 0 & 0 & 0 & -6  \end{bmatrix}$$
	and
	$$U_5 = \begin{bmatrix} 1 & 8 & 36 & 96 & 120 & 0 & 0 & 0 \\ 0 & -4 & -24 & -72& -96 & 0 & 0 & 0 \\ 0 & 0 & 2 & 8 & 72 & 120 & 0 & 0 \\ 0 & 0 & 0 & 12 & 72 & 144 & 0 & 0 \\ 0 & 0 & 0 & 0 & 6 & 48 & 120 & 0 \\ 0 & 0 & 0 & 0 & 0 & -24 & -96 & 0 \\ 0 & 0 & 0 & 0 & 0 & 0 & 24 & 120 \\ 0 & 0 & 0 & 0 & 0 & 0 & 0 & 24 \end{bmatrix}.$$
	
	
	We see that det$(T_m)$ = det$(U_m)$ is equal to the product of the diagonal entries of $U_m$.  Then we have the following {claim}.

	{
		\textbf{Claim 5.1} Suppose  the LU decomposition of $T_m=L_mU_m$, where $L_m=(l_{ij})\in\mathbb R^{2(m-1)\times 2(m-1)}$ and $U_m=(u_{ij})\in\mathbb R^{2(m-1)\times 2(m-1)}$. Then,   we have $u_{2i-1, 2i-1} = i!$ and $u_{2i, 2i} = (-1)^i\left(m-1 \atop i\right)i!$ for $i = 1, \dots, m-1$.
	}

 By the above discussion, we see that this conclusion is true for $m = 2, 3, 4, 5$.   Assume that $m \ge 6$.
		
		Write $U_m = (u_{ij})$, $T_m = (t_{ij})$ and $L_m = (l_{ij})$ for $i, j = 1, \dots, 2m-2$.   Then $l_{ii} = 1$, $l_{ij} = 0$ for $j > i$ and $u_{ij} = 0$ for $j < i$.   By $T_m = L_mU_m$, for $i, j = 1, \dots, 2m-2$, we have
		\begin{equation} \label{LU}
			t_{ij} = \sum_{k=1}^{2m-2}l_{ik}u_{kj} = \sum_{k=1}^{\min \{i, j\}} l_{ik}u_{kj}
		\end{equation}
		Letting $i=1$ in (\ref{LU}),  we have $u_{1j} = t_{1j}$ for $j = 1, \dots, 2m-2$.  This implies that $u_{11} = t_{11} = 1$.
		
		Now, letting $i\ge 2$ and $j=1$ in (\ref{LU}),  we have $t_{i1} = l_{i1}u_{11} = l_{i1}$.  However, $t_{21} = 1$ and $t_{i1} = 0$ for $i \ge 3$.  Thus, $l_{21} = 1$ and $l_{i1} = 0$ for $i \ge 3$.
		
		Consider $i=2$ and $j \ge 2$ in (\ref{LU}).   We have $t_{2j} = l_{21}u_{1j} + l_{22}u_{2j} = t_{1j} + u_{2j}$.  This implies that
		$$u_{2j} = t_{2j}-t_{1j} = \left({m-1 \atop j-1}\right)(j-1)!(1-j),$$
		for $j = 2, \dots, m$, and $u_{2j} = t_{2j}-t_{1j} = 0$ for $j=m+1, \dots, 2m-2$.   Then $u_{22} = 1-m$.
		
		For {$i\ge3$} and $j=2$, by (\ref{LU}), we have $t_{32} = l_{31}u_{12} + l_{32}u_{22} = l_{32} (1-m)$.   Thus, $l_{32} = {t_{32} \over 1-m}$.  Since
		$t_{32} = 1$, $l_{32} = {-1 \over m-1}$. {Similarly, we have $l_{42} = \frac{-1}{m-1}$ and $l_{i2}=0$ for any $i\ge 5$.}
		
		For $i=3$ and $j\ge 3$, by (\ref{LU}), we have $t_{3j} = l_{31}u_{1j} + l_{32}u_{2j} + l_{33}u_{3j} = {u_{2j} \over 1-m} + u_{3j}$, i.e.,
		$$u_{3j} = t_{3j} - {u_{2j} \over 1-m},$$
		$$u_{33} = t_{33} - {u_{23} \over 1-m} = 2(m-1) + {\left({m-1 \atop 2}\right)2! \times 2 \over 1-m} = 2.$$
		{For  $i\ge4$ and $j=3$, we could derive $l_{43}=\frac{3-m}{2}$, $l_{53}=\frac12$, $l_{63}=\frac12$, and  $l_{i3}=0$ for any $i\ge 7$.}

		By repeating this process, we could derive that for $i=1,\dots,m-1$, $j=1,\dots,2m-2$,
		\[
		u_{2i-1, j} = \left\{
		\begin{array}{cl}
			i!,& \text{if } j = 2i-1; \\
			\left(m-i \atop j-2i+1\right)(j-i+1)!, & \text{if } 2i\le j\le m+i-1; \\
			0, & \text{otherwise},
		\end{array}
		\right.
		\]		
		\[
		u_{2i,j} = \left\{
		\begin{array}{cl}
			(-1)^i\left(m-1 \atop j-i\right)\left(j-i \atop i \right)(j-i)!, & \text{if } 2i\le j\le m+i-1; \\
			0, & \text{otherwise},
		\end{array}
		\right.
		\]
		and for $i = 1, \dots, 2m-2$,
		\begin{equation}\label{equ:L}
			l_{ij} =  \left\{
			\begin{array}{cl}
				1 & \text{if } j=i;\\
				\frac{|X_{ij}|}{u_{11}\dots u_{jj}}, & \text{if } j+1\le i\le 2j; \\
				0, & \text{otherwise},
			\end{array}
			\right.
		\end{equation}
		where $X_{ij}$ is a $j \times j$ matrix,
		\[
		X_{ij} = \begin{bmatrix}
			u_{11} & u_{12} & \dots & u_{1,j-1}& u_{1j} \\
			0 & u_{22} & \dots & u_{2,j-1} & u_{2j}  \\
			\vdots &\vdots &\ddots &\vdots &\vdots \\
			0 &0& \dots & u_{j-1,j-1}  & u_{j-1,j} \\
			t_{i1} & t_{i2} & \dots & t_{i,j-1}& t_{ij}
		\end{bmatrix}.
		\]
		By matrix multiplication, we have $L_mU_m = T_m$.
		Then, for $i = 1, \dots, m-1$, $u_{2i-1, 2i-1} = i!$ and $u_{2i, 2i} = (-1)^i\left(m-1 \atop i\right)i!$.

		{
			By \eqref{equ:L}, the expression of the lower-triangular matrix $L$ is more complicated than that of $U$. We list the  upper   left corner  of $L$ as follows,
			\begin{eqnarray*}
				L=	\begin{bmatrix}
					1&&& &&&\\
					1&1 &&&&&\\
					&\frac{1}{u_{22}} & 1 &&&&\\
					&\frac{1}{u_{22}} &\frac{3-m}{u_{33}} & 1 &&&\\
					&& \frac1{u_{33}} & \frac{4-m}{u_{44}} & 1 & &\\
					&&  \frac1{u_{33}} &\frac{5-2m}{u_{44}}  & \frac{m^2-6m+11}{u_{55}}& 1  &\\
					&&&  \frac1{u_{44}} &\frac{7-m}{u_{55}}  & \frac{m^2-7m+18}{u_{66}}& 1  &\\
					&&&  \frac1{u_{44}} &\frac{8-2m}{u_{55}}  & \frac{3m^2-17m+26}{u_{66}}&  \frac{(m-5)(-m^2+5m-10)}{u_{77}}&\cdots   \\
					&&&   &\frac{1}{u_{55}}  & \frac{10-2m}{u_{66}}&  \frac{m^2-11m+46}{u_{77}} &\cdots\\
					&&&   &\frac{1}{u_{55}}  & \frac{11-3m}{u_{66}}&  \frac{3m^2-25m+58}{u_{77}} &\cdots \\
					&&&   &&\frac{1}{u_{66}}  & \frac{14-2m}{u_{77}}&    \cdots  \\
					&&&   &&\frac{1}{u_{66}}  & \frac{15-3m}{u_{77}}&    \cdots  \\
					&&&&   &&\frac{1}{u_{77}}  &    \cdots  \\
					&&&&   &&\frac{1}{u_{77}}  &    \cdots  \\    &  &&&&   & & \cdots
				\end{bmatrix}.
			\end{eqnarray*}
		}
		
			\begin{Thm}
			For $m \ge 2$, we have $f(m, 2) = \left[(m-1)!\right]^m$.
		\end{Thm}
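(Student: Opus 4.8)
The plan is to read the determinant off the LU factorization and then collapse everything to an elementary product identity. The case $m = 2$ is immediate, since $f(2,n) \equiv 1 = [(m-1)!]^m$; so assume $m \ge 3$. By Claim 5.1 the diagonal entries of the upper triangular factor $U_m$ are $u_{2i-1,2i-1} = i!$ and $u_{2i,2i} = (-1)^i\binom{m-1}{i}i!$ for $i = 1,\dots,m-1$, and $\det(T_m) = \det(U_m)$ is the product of these $2(m-1)$ numbers. I would therefore write
$$\det(T_m) = \prod_{i=1}^{m-1}\left(i!\cdot(-1)^i\binom{m-1}{i}i!\right) = (-1)^{\sum_{i=1}^{m-1}i}\,\prod_{i=1}^{m-1}(i!)^2\binom{m-1}{i}.$$
Because $\sum_{i=1}^{m-1}i = \frac{m(m-1)}{2}$, the sign $(-1)^{m(m-1)/2}$ appearing here is exactly the one produced by the row permutation $P_m$ in the identity $f(m,2) = (-1)^{m(m-1)/2}\det(T_m)$ established above; since $m(m-1)$ is even the two signs cancel and
$$f(m,2) = \prod_{i=1}^{m-1}(i!)^2\binom{m-1}{i},$$
a positive integer.

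It then remains to verify the identity $\prod_{i=1}^{m-1}(i!)^2\binom{m-1}{i} = [(m-1)!]^m$. Writing $\binom{m-1}{i} = \frac{(m-1)!}{i!\,(m-1-i)!}$ gives
$$\prod_{i=1}^{m-1}(i!)^2\binom{m-1}{i} = [(m-1)!]^{m-1}\prod_{i=1}^{m-1}\frac{i!}{(m-1-i)!}.$$
Reindexing the denominators by $j = m-1-i$ turns $\prod_{i=1}^{m-1}(m-1-i)!$ into $\prod_{j=0}^{m-2}j! = \prod_{j=1}^{m-2}j!$, so the remaining quotient is $\left(\prod_{i=1}^{m-1}i!\right)\big/\left(\prod_{j=1}^{m-2}j!\right) = (m-1)!$. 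Multiplying by the prefactor $[(m-1)!]^{m-1}$ gives $[(m-1)!]^m$, which is the claim. The values $f(3,2) = 8$, $f(4,2) = 6^4$, $f(5,2) = 24^5$ computed earlier, and the displayed matrices $U_3, U_4, U_5$, serve as consistency checks.

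Since the substantive work — the explicit closed forms for all entries of $L_m$ and $U_m$ — has already been carried out in establishing Claim 5.1, there is no real obstacle remaining. The one point that deserves a little care is the sign bookkeeping: one must make sure that the $(-1)^{m(m-1)/2}$ coming from $P_m$ and the alternating signs carried by the even-indexed diagonal entries of $U_m$ combine to leave $f(m,2)$ positive, which is precisely what $\sum_{i=1}^{m-1} i = \frac{m(m-1)}{2}$ guarantees. Everything else is the elementary product manipulation displayed above.
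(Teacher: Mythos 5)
Your proposal is correct and follows essentially the same route as the paper: both read $f(m,2)$ off the diagonal of $U_m$ via Claim 5.1, cancel the row-permutation sign $(-1)^{m(m-1)/2}$ against $\prod_{i=1}^{m-1}(-1)^i$, and then reduce to an elementary product identity equal to $\left[(m-1)!\right]^m$. The only (immaterial) difference is that the paper pairs $u_{2k-1,2k-1}$ with $|u_{2m-2k-2,2m-2k-2}|$ so each pair gives $(m-1)!$ directly, while you expand the binomial coefficients and telescope.
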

		\begin{proof}
			{Based on Claim 5.1, we have}
		\begin{eqnarray*}
			f(m, 2) & = & {\rm det}(S_m) = (-1)^{m(m-1) \over 2} \times {\rm det}(T_m) = (-1)^{m(m-1) \over 2} \times \prod_{r=1}^{2m-2} u_{r, r} \\
			& = & (-1)^{m(m-1) \over 2} \times \prod_{i=1}^{m-1} (-1)^i \times \prod_{r=1}^{2m-2} |u_{r, r}| = \prod_{r=1}^{2m-2} |u_{r, r}|.
		\end{eqnarray*}
		Therefore,
		\begin{eqnarray*}
			& & f(m, 2) \\
			& =  &  \prod_{r=1}^{2m-2} |u_{r, r}| \\
			& = &  \left[\prod_{k=1}^{m-2}u_{2k-1, 2k-1}\left|{u_{2m-2k-2,2m-2k-2}}\right|\right]\times u_{2m-3, 2m-3} \times \left|u_{2m-2, 2m-2}\right|\\
			& = &  \left[\prod_{k=1}^{m-2} k! \times \left(m-1 \atop m-k{-1}\right)\times (m-k{-1})! \right] \times \left[(m-1)!\right]^2 \\
			& = & \left[(m-1)!\right]^m.
		\end{eqnarray*}
		
		This completes the proof.
	\end{proof}

	{\section{Determinants: The General Case}}
	
	The entries of the $m$th order $n$-dimensional Pascal tensor are the coefficients of the $n$-th power of an $m$-term multinomial.
	{The  determinant of the $m$th order $n$-dimensional Pascal tensor $\cal P$, $f(m, n)$, is equal to the multivariate resultant of $\mathcal Px^{m-1}=0$.}  We may use the Macaulay formula \cite{CLO98, DD01} to  calculate $f(m, n)$ for $m \ge 3$ and $n \ge 3$.  The Macaulay formula converts the computation of a resultant to the computation of the quotient of two matrix determinants.
	Thus, theoretically, we even do not know if $f(m, n)$ is an integer for $m \ge 3$ and $n \ge 3$.
	
	By computation via the Macaulay formula and (\ref{pascal}), we have the following results
	$$f(3, 2) ={ \left|\begin{array}{cccc}
			1 & 2 & 2 & 0  \\
			0 & 1 & 2 & 2\\
			1 & 4 & 6 & 0\\
			0 & 1 & 4 & 6
		\end{array}\right| \slash 1} = 8 = 2^3,$$
	\begin{eqnarray*}
		&&	f(3, 3)\\
		&=&{
			\left| \begin{array}{ccccccccccccccc}
				1 & 2 & 2 & 2 & 6 & 6 & 0 & 0 & 0 & 0 & 0 & 0 & 0 & 0 & 0 \\
				0 & 1 & 0 & 2 & 2 & 0 & 2 & 6 & 6 & 0 & 0 & 0 & 0 & 0 & 0\\
				0 & 0 & 1 & 0 & 2 & 2 & 0 & 2 & 6 & 6 & 0 & 0 & 0 & 0 & 0\\
				0 & 0 & 0 & 1 & 0 & 0 & 2 & 2 & 0 & 0 & 2 & 6 & 6 & 0 & 0\\
				0 & 0 & 0 & 0 & 1 & 0 & 0 & 2 & 2 & 0 & 0 & 2 & 6 & 6 & 0\\
				0 & 0 & 0 & 0 & 0 & 1 & 0 & 0 & 2 & 2 & 0 & 0 & 2 & 6 & 6\\
				0 & 1 & 0 & 4 & 6 & 0 & 6 & 24 & 30 & 0 & 0 & 0 & 0 & 0 & 0\\
				0 & 0 & 1 & 0 & 4 & 6 & 0 & 6 & 24 & 30 & 0 & 0 & 0 & 0 & 0\\
				0 & 0 & 0 & 1 & 0 & 0 & 4 & 6 & 0 & 0 & 6 & 24 & 30 & 0 & 0\\
				0 & 0 & 0 & 0 & 1 & 0 & 0 & 4 & 6 & 0 & 0 & 6 & 24 & 30 & 0\\
				0 & 0 & 0 & 0 & 0 & 1 & 0 & 0 & 4 & 6 & 0 & 0 & 6 & 24 & 30\\
				0 & 1 & 0 & 6 & 12 & 0 & 12 & 60 & 90 & 0 & 0 & 0 & 0 & 0 & 0\\
				0 & 0 & 1 & 0 & 6 & 12 & 0 & 12 & 60 & 90 & 0 & 0 & 0 & 0 & 0\\
				0 & 0 & 0 & 0 & 1 & 0 & 0 & 6 & 12 & 0 & 0 & 12 & 60 & 90 & 0\\
				0 & 0 & 0 & 0 & 0 & 1 & 0 & 0 & 6 & 12 & 0 & 0 & 12 & 60 & 90
			\end{array} \right|\slash
			\left| \begin{array}{ccc} 1 & 0 & 6\\0 & 1 & 2\\ 0 & 1 & 6 	\end{array} \right|}\\
		&=& 72^3,
	\end{eqnarray*}
	and
	$$f(3, 4) = 72^3 \times 80^9 = \left(72 \times 512000\right)^3.$$

	Then we have a conjecture.
	
	{\bf Conjecture 1} For $n \ge 2$, $f(3, n)$ is a cubic number.
	
	Write $f(3, n) = \left[g(3, n)\right]^3$.  We have $g(3, 2) = 2$, $g(3, 3) = 72$ and $g(3, 4) = 72 \times 512000$. Then $g(3, 3) = g(3, 2) \times 6^2$ and $g(3, 4) = g(3, 3) \times 80^3$.   Thus, we have a further conjecture.
	
	{\bf Conjecture 2} For $n \ge 2$, $g(3, n+1)$ can be divided by $g(3, n)$, i.e., $g(3, n) = g(3, n-1)h(3, n)$ for some positive integer valued function $h$.
	
	According the last section, we have $f(m, 2) = \left[(m-1)!\right]^m$.   By computation, we know that
	$$f(3, 3) = \left[ 2^3 \times 3^2 \right]^3,$$
	$$f(4, 3) = \left[ 2^5 \times 3^9 \times 5^2 \right]^4,$$
	$$f(5, 3) = \left[ 2^{21} \times 3^{13} \times 5^4 \times 7^2 \right]^5,$$
	$$f(6, 3) = \left[ 2^{27} \times 3^{21} \times 5^{15} \times 7^4 \right]^6,$$
	$$f(7, 3) = \left[ 2^{44} \times 3^{40} \times 5^{19} \times 7^6 \times 11^2 \right]^7,$$
	$$f(8, 3) = \left[ 2^{52} \times 3^{50} \times 5^{23} \times 7^{21} \times 11^4 \times 13^2 \right]^8,$$
	$$f(9, 3) = \left[ 2^{105} \times 3^{62} \times 5^{29} \times 7^{25} \times 11^6 \times 13^4 \right]^9.$$
	
	Therefore, in general, we have the following conjecture.
	
	{\bf Conjecture 3} For $m \ge 3$ and $n \ge 2$, we have $f(m, n) = [g(m, n)]^m$, $g(m, 2) = (m-1)!$, $g(m, n+1) = g(m, n)h(m, n+1)$, where $g$ and $h$ are positive integer valued functions.
	
	Then we have $g(m, 2) = (m-1)!$,
	$$h(3, 3) = 6^2,$$
	$$h(4, 3) = \left(2^2 \times 3^4 \times 5\right)^2,$$
	$$h(5, 3) = \left(2^9 \times 3^6 \times 5^2 \times 7 \right)^2,$$
	$$h(6, 3) = \left(2^{12} \times 3^{10} \times 5^7 \times 7^2 \right)^2,$$
	$$h(7, 3) = \left(2^{20} \times 3^{19} \times 5^9 \times 7^3 \times 11 \right)^2,$$
	$$h(8, 3) = \left(2^{24} \times 3^{24} \times 5^{11} \times 7^{10} \times 11^2 \times 13 \right)^2,$$
	$$h(9, 3) = \left(2^{49} \times 3^{30} \times 5^{14} \times 7^{12} \times 11^3 \times 13^2 \right)^2.$$
	
	
	{\bf Conjecture 4} For $m \ge 3$, we have $f(m, 3) = \left[(m-1)! (s(m))^2\right]^m$,  where $s$ is a positive integer valued function.
	
	Here, we have
	$$s(3) = 2 \times 3,$$
	$$s(4) = 2^2 \times 3^4 \times 5,$$
	$$s(5) = 2^9 \times 3^6 \times 5^2 \times 7,$$
	$$s(6) = 2^{12} \times 3^{10} \times 5^7 \times 7^2,$$
	$$s(7) = 2^{20} \times 3^{19} \times 5^9 \times 7^3 \times 11,$$
	$$s(8) = 2^{24} \times 3^{24} \times 5^{11} \times 7^{10} \times 11^2 \times 13,$$
	$$s(9) = 2^{49} \times 3^{30} \times 5^{14} \times 7^{12} \times 11^3 \times 13^2.$$
	
	Based on these computational results, we have the following conjecture.
	
	{\bf Conjecture 5} For $m \ge 3$, we have $s(m) = \left[(m-1)!\right]^{m-1} \times 3!! \times \dots \times (2m-3)!!$, i.e.,
	\begin{equation} \label{fm3}
		f(m, 3) = \left\{\left[(m-1)!\right]^{2m-3} \times \left[3!! \times \dots \times (2m-3)!!\right]^2 \right\}^m.
	\end{equation}
	
	The above computational results for $m = 3, \dots, 9$, confirm this conjecture.   It would be a big challenge to prove or to disprove (\ref{fm3}).
	
	\bigskip
	
	{
		\section{Conclusions and Discussions}
		In this paper, we showed the even order Pascal tensors are    positive definite and strongly completely positive. We also showed that odd order Pascal tensors are   strongly completely positive.  We further presented the determinants of 2-dimensional Pascal tensors.   These research topics are highly related with each other. These attempts   not only increase our knowledge on Pascal tensors, but also enhance our understanding of general  structured  tensors.   This is the significance of our exploration.
		
		{In this paper, we concluded that an inherence property for Pascal tensors may work for some other completely positive tensor families, such as
			generalized Pascal tensors and  fractional Hadamard power tensors.   More study on such tensors and some other possible completely positive tensor classes would also be interesting.
			
			Very recently, the concepts of positive semi-definiteness, SOS and positive definiteness were extended to odd order symmetric tensors in \cite{QC25}.  Let $\A = \left(a_{i_1\dots i_m}\right)$ be an $m$th order $n$-dimensional symmetric tensor, where $m \ge 2$ is an odd number.  Then $G_{\A}(\vx) = \A\vx^{m-1}$ defines a function $G_\A : {\mathbb R}^n \to {\mathbb R}^n$.  If $G_\A(\vx) \ge \0$ for $\vx \in {\mathbb R}^n$, then $\A$ is called a strongly positive semi-definite tensor.   If each component of $G_\A(\vx)$ can be expressed as SOS polynomials of $x_1, \dots, x_n$, then $\A$ is called a strongly SOS tensor.  If If $G_\A(\vx) > \0$ for $\vx \in {\mathbb R}^n$ and $\vx \not = \0$, then $\A$ is called a strongly positive definite tensor.   Odd order completely positive tensors are strongly positive semi-definite and SOS tensors.  Thus, odd order Pascal tensors are strongly positive semi-definite and SOS tensors.  On the other hand, though several other odd order completely positive tensor classes, such as odd order Hilbert tensors and Lehmer tensors, have been proved to be strongly positive definite, we still do not know if odd order Pascal tensors are strongly positive definite or not.   Hence, this will be the next challenging question for Pascal tensors.}
		
		We explored the value of $f(m, n)$ for $n \ge 3$.   There are still some conjectures left unanswered.  We made a conjecture that $f(m, n)$ is the $m$th power of an integer function, and is an integer factor of $f(m, n+1)$.  In particular, we made a conjecture on the formula of $f(m, 3)$ for $m \ge 3$.
		It would be intriguing to provide responses to them.

	}
	
	
	
	
	
	\bigskip	
	
	
	{{\bf Acknowledgment}}
	This work was partially supported by   the R\&D project of Pazhou Lab (Huangpu) (Grant no. 2023K0603),  the National Natural Science Foundation of China (Nos. 12471282, 12131004, 12171168 and 12071159), and the Fundamental Research Funds for the Central Universities (Grant No. YWF-22-T-204).
	
	

	{{\bf Data availability} Data will be made available on reasonable request.

		{\bf Conflict of interest} The authors declare no conflict of interest.}

	


\end{document}